\documentclass[12pt]{article}
\usepackage{amssymb}
\usepackage{amsthm}
\usepackage{amsmath}
\usepackage{graphicx}
\usepackage{enumerate}
\usepackage{pict2e}
\usepackage{framed}

\DeclareMathOperator{\Tr}{Tr}

\newtheorem{theorem}{Theorem}[section]
\newtheorem{definition}[theorem]{Definition}
\newtheorem{lemma}[theorem]{Lemma}
\newtheorem{proposition}[theorem]{Proposition}

\newtheorem{example}[theorem]{Example}

\title{Lattices and semilattices derived from commutative rings of characteristic $2$ satisfying the identity $x^{2^n}\approx x$\thanks{Support of the research of the first author by the Czech Science Foundation (GA\v CR), project 25-20013L, and support of the research of the third author by the Austrian Science Fund (FWF), project 10.55776/PIN5424624, is gratefully acknowledged.}}
\author{Ivan~Chajda, Miroslav~Kola\v r\'ik and Helmut~L\"anger}
\date{}

\begin{document}
	
\maketitle
	
\begin{abstract}
We prove that a commutative ring $\mathbf R=(R,+,\cdot)$ of characteristic $2$ satisfying the identity $x^{2^n}\approx x$ together with the binary relation $\le$ on $R$ defined by $x\le y$ if $xy=x^2$ forms a meet-semilattice with smallest element $0$. If, moreover, $\mathbf R$ is unitary then we derive two binary term operations $\wedge$ and $\vee$ on $R$ which together with the unary term operation $x':=x+1$ form a Boolean algebra.
\end{abstract}

{\bf AMS Subject Classification:} 16R10, 16R50, 06E20

{\bf Keywords:} commutative ring, finite field, polynomial identity, term operation, Boolean algebra, semilattice

\section{Introduction}

In his pioneering works \cite{Bo47} and \cite{Bo54} G.~Boole showed that the classical propositional calculus can be formalized by unitary Boolean rings. Later on, G.~Birkhoff \cite{Bi} showed that to every such ring there can be assigned a Boolean algebra, i.e.\ a bounded distributive lattice with complementation. Also vice versa, to every Boolean algebra there can be assigned a unitary Boolean ring and these assignments are in a one-to-one correspondence. Recall that a Boolean ring is a ring satisfying the identity $x^2\approx x$. It is easy to show that every such ring is commutative and of characteristic $2$, i.e.\ it satisfies the identity $x+x\approx0$. Surprisingly, in \cite{KDG} it was proved that a Boolean algebra can be derived also from a unitary ring satisfying the identity $x^4\approx x$. In their recent paper \cite{CK}, the first two authors obtained a similar result for rings satisfying the identity $x^{2^n}\approx x$ for $n\in\{3,4\}$, but neither the identity $x^2\approx x$ nor the identity $x^4\approx x$. In the present paper we generalize this to the case of arbitrary positive integers $n$.

\section{The general construction}

In the following let $n$ be a positive integer and put $q:=2^n$.

A {\em ring} $(R,+,\cdot)$ satisfying the identity $x+x\approx0$ is said to be {\em of characteristic $2$}. A commutative binary operation $\wedge$ on $R$ is called {\em distributive with respect to $+$} if $(x+y)\wedge z=(x\wedge z)+(y\wedge z)$ for all $x,y,z\in R$. Let $\mathbb N_0$ denote the set of all non-negative integers. It is well-known and easy to see that in a commutative ring $(R,+,\cdot)$ of characteristic $2$ we have
\[
(x+y)^{2^k}=x^{2^k}+y^{2^k}
\]
for all $x,y\in R$ and all $k\in\mathbb N_0$.

\begin{theorem}\label{th1}
Let $\mathbf R=(R,+,\cdot)$ be a ring of characteristic $2$, $\wedge$ an idempotent, commutative and associative binary operation on $R$, assume $\wedge$ to be distributive with respect to $+$ and define $x\vee y:=x+y+(x\wedge y)$ for all $x,y\in R$. Then the following hold:
\begin{enumerate}[{\rm(i)}]
\item $(R,\vee,\wedge)$ is a distributive lattice,
\item if $\mathbf R$ is unitary and satisfies the identity $x\wedge1\approx x$ and $x':=x+1$ for all $x\in R$ then $(R,\vee,\wedge,{}',0,1)$ is a Boolean algebra.
\end{enumerate}
\end{theorem}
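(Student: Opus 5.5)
The plan is to observe that $(R,+,\wedge)$ is essentially a Boolean ring, with $\wedge$ in the role of multiplication, and then to repeat the classical Boolean ring to Boolean algebra argument. Here $\wedge$ is commutative, associative and idempotent, and distributes over $+$. Commutativity turns the given one-sided distributivity into two-sided distributivity. First we record $0\wedge x=0$: from $(0+0)\wedge x=(0\wedge x)+(0\wedge x)$ we get $0\wedge x=0$, since characteristic $2$ gives $u+u=0$. The ring multiplication $\cdot$ plays no role in (i).

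For (i), $\wedge$ is already a semilattice operation, so we verify the following:
\begin{itemize}
\item[(a)] $\vee$ is commutative, idempotent and associative;
\item[(b)] the absorption laws hold;
\item[(c)] distributivity holds.
\end{itemize}
For (a), commutativity is clear, and $x\vee x=x+x+x=x$. Associativity comes from expanding both sides: $(x\vee y)\vee z=x+y+z+x\wedge y+x\wedge z+y\wedge z+x\wedge y\wedge z$ by distributivity, and this expression is symmetric in $x,y,z$.

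For (b), we compute
\[
x\wedge(x\vee y)=x+x\wedge y+x\wedge y=x,
\]
and
\[
x\vee(x\wedge y)=x+x\wedge y+x\wedge x\wedge y=x.
\]

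For (c), we compute
\[
x\wedge(y\vee z)=x\wedge y+x\wedge z+x\wedge y\wedge z,
\]
and
\[
(x\wedge y)\vee(x\wedge z)=x\wedge y+x\wedge z+x\wedge y\wedge x\wedge z.
\]
These agree by idempotency and associativity. Hence $(R,\vee,\wedge)$ is a distributive lattice.

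For (ii), $0$ is the least element because $0\wedge x=0$. The element $1$ is the greatest element because $x\wedge 1=x$. For complementation,
\[
x\wedge x'=x\wedge x+x\wedge 1=x+x=0,
\]
and
\[
x\vee x'=x+x+1+0=1.
\]
So $'$ is a complementation, and the bounded distributive lattice with it is a Boolean algebra. Uniqueness of complements in distributive lattices is not even needed, since the definition only requires $'$ to be a complement.

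The main obstacle is mostly bookkeeping: the associativity expansion of $\vee$ and keeping track of where characteristic $2$ cancels terms. Everything else is immediate.
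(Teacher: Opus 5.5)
Your proposal is correct and follows essentially the same route as the paper. Both arguments directly verify commutativity, associativity (via the symmetric expansion), absorption and distributivity using distributivity of $\wedge$ over $+$ and $u+u=0$, and then compute $x\wedge(x+1)=0$ and $x\vee(x+1)=1$; you are slightly more explicit than the paper in deriving $0\wedge x=0$ and checking that $0$ and $1$ are the bounds.
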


\begin{proof}
Let $a,b,c\in R$.
\begin{enumerate}[(i)]
\item We have
\begin{align*}
          a\vee b & =a+b+(a\wedge b)=b+a+(b\wedge a)=b\vee a, \\
  (a\vee b)\vee c & =a+b+(a\wedge b)+c+\Big(\big(a+b+(a\wedge b)\big)\wedge c\Big)= \\
                  & =a+b+c+(a\wedge b)+(a\wedge c)+(b\wedge c)+(a\wedge b\wedge c)= \\
                  & =a+b+c+(b\wedge c)+\Big(a\wedge\big(b+c+(b\wedge c)\big)\Big)=a\vee(b\vee c), \\
(a\vee b)\wedge a & =\big(a+b+(a\wedge b)\big)\wedge a=a+(a\wedge b)+(a\wedge b)=a, \\
(a\wedge b)\vee a & =(a\wedge b)+a+\big((a\wedge b)\wedge a\big)=(a\wedge b)+a+(a\wedge b)=a, \\
(a\vee b)\wedge c & =\big(a+b+(a\wedge b)\big)\wedge c=(a\wedge c)+(b\wedge c)+(a\wedge b\wedge c)= \\
                  & =(a\wedge c)+(b\wedge c)+\big((a\wedge c)\wedge(b\wedge c)\big)=(a\wedge c)\vee(b\wedge c).        
\end{align*}
\item
We have
\begin{align*}
  a\vee(a+1) & =a+a+1+\big(a\wedge(a+1)\big)=1+(a\wedge a)+(a\wedge1)=1+a+a=1, \\
a\wedge(a+1) & =(a\wedge a)+(a\wedge1)=a+a=0.	
\end{align*}
\end{enumerate}	
\end{proof}

Observe that every unitary ring satisfying the identity $x^{2m}\approx x$ for a positive integer $m$ is of characteristic $2$. This can be seen as follows: Let $(R,+,\cdot,1)$ be such a ring. Then $-1=(-1)^{2m}=1$ and hence $1+1=0$ which shows $x+x=0$ for all $x\in R$.

\section{Derived semilattice}

In the following let $\mathbf R=(R,+,\cdot)$ be a (not necessarily unitary) commutative ring of characteristic $2$ satisfying the identity $x^{2^n}\approx x$ where $n$ is a positive integer.

For $a,b\in R$ we define $a^0b:=b$.

\begin{lemma}
Let $a,b\in R$ and $k$ be a positive integer. Then
\[
(a+b)^{2^k-1}=\sum_{i=0}^{2^k-1}a^ib^{2^k-1-i}.\tag{1}
\]	
\end{lemma}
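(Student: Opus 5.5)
The plan is to obtain (1) as a polynomial identity in characteristic $2$, using the telescoping identity for $a^{2^k}+b^{2^k}$ together with the Frobenius identity $(x+y)^{2^k}=x^{2^k}+y^{2^k}$ recalled in Section~2. Since $\mathbf R$ need not be unitary, I will avoid dividing by $a+b$ and argue directly, by induction on $k$, using only the factorization $2^{k+1}-1=(2^k-1)+2^k$. The convention $a^0b:=b$ makes the sum meaningful without a unit: a term $a^ib^{j}$ with $i=0$ means $b^j$, and with $j=0$ means $a^i$.

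\textbf{Base case.} For $k=1$ the claim reads $(a+b)^1=b+a$, which is immediate.

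\textbf{Induction step.} Assume (1) holds for $k$. Then
\[
(a+b)^{2^{k+1}-1}=(a+b)^{2^k-1}(a+b)^{2^k}=\Big(\sum_{i=0}^{2^k-1}a^ib^{2^k-1-i}\Big)\big(a^{2^k}+b^{2^k}\big),
\]
using the Frobenius identity for the second factor. Expanding by commutativity gives two blocks of monomials:
\begin{itemize}
\item multiplying by $b^{2^k}$ produces $a^ib^{2^{k+1}-1-i}$ for $i=0,\dots,2^k-1$;
\item multiplying by $a^{2^k}$ produces $a^{i+2^k}b^{2^k-1-i}$, i.e.\ $a^jb^{2^{k+1}-1-j}$ for $j=2^k,\dots,2^{k+1}-1$.
\end{itemize}
The two index ranges are disjoint and together cover $0,\dots,2^{k+1}-1$, which yields (1) for $k+1$.

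\textbf{Main obstacle.} The only delicate point is the boundary terms, where an exponent is $0$ and $\mathbf R$ may lack an identity. Here I check that the products behave as formal monomials. For instance, $(a^0b^{2^k-1})\cdot a^{2^k}=b^{2^k-1}a^{2^k}$, which matches the term $a^{2^k}b^{2^k-1}$. Similarly, $a^{2^k-1}b^0\cdot a^{2^k}$ is $a^{2^{k+1}-1}$. This requires reading $a^ib^0$ as $a^i$, the symmetric convention, which I will state explicitly alongside $a^0b:=b$. With these conventions every monomial lies in $R$ without needing $1$, and the computation is a genuine identity in the non-unital ring.

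Note that characteristic $2$ is used only through the Frobenius identity; no cancellation of cross terms is needed, since the two blocks simply concatenate.
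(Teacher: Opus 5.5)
Your proof is correct. It follows the same overall plan as the paper, induction on $k$ together with the Frobenius identity in characteristic $2$, but it splits the exponent differently.

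The paper writes $2^{k+1}-1=2(2^k-1)+1$. It squares the inductive sum term by term and then multiplies by $a+b$. The two resulting families of monomials therefore interleave by the parity of the exponent of $a$: odd exponents come from the factor $a$, even ones from the factor $b$.

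You write $2^{k+1}-1=(2^k-1)+2^k$ and multiply by $(a+b)^{2^k}=a^{2^k}+b^{2^k}$. Your two families are instead the contiguous blocks $\{0,\dots,2^k-1\}$ and $\{2^k,\dots,2^{k+1}-1\}$, related by a shift of $2^k$.

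Your version needs the Frobenius identity only for a binomial, exactly as recalled in Section~2. The paper implicitly uses it for a sum of $2^k$ terms, which follows from the binomial case. In binary terms, your new factor $a^{2^k}+b^{2^k}$ decides the highest binary digit of the exponent of $a$, while the paper's new factor $a+b$ decides the lowest. Unrolling either induction gives the direct identity $(a+b)^{2^k-1}=\prod_{j=0}^{k-1}\bigl(a^{2^j}+b^{2^j}\bigr)$.

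Your explicit convention $a^ib^0:=a^i$ is harmless. It is the paper's convention $a^0b:=b$ with the roles of the two elements exchanged, using commutativity.
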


\begin{proof}
We use induction on $k$. Obviously, (1) holds for $k=1$. Now assume that $k$ is a positive integer and (1) holds for $k$. Then
\begin{align*}
(a+b)^{2^{k+1}-1} & =\big((a+b)^{2^k-1}\big)^2(a+b)=\left(\sum_{i=0}^{2^k-1}a^ib^{2^k-1-i}\right)^2(a+b)= \\
                  & =\left(\sum_{i=0}^{2^k-1}a^{2i}b^{2^{k+1}-2-2i}\right)(a+b)=\sum_{i=0}^{2^k-1}a^{2i+1}b^{2^{k+1}-2-2i}+\sum_{i=0}^{2^k-1}a^{2i}b^{2^{k+1}-1-2i}= \\
                  & =\sum_{j=0}^{2^{k+1}-1}a^jb^{2^{k+1}-1-j},
\end{align*}
i.e.\ (1) holds for $k+1$ instead of $k$.
\end{proof}

Now we define the main concept of this section.

\begin{definition}\label{def1}
On $R$ we define a binary term operation $\wedge$ as follows:
\[
x\wedge y:=\sum_{i=1}^{q-1}x^iy^{q-i}
\]
for all $x,y\in R$.
\end{definition}

Basic properties of this term operation are listed in the following Proposition.

\begin{proposition}\label{prop2}
Let $a,b\in R$ and $k$ be a positive integer. Then the following hold:
\begin{enumerate}[{\rm(i)}]
\item $a\wedge a=a$, $a\wedge b=b\wedge a$ and $a\wedge0=0$,
\item $a\wedge b=a+a(a+b)^{q-1}=b+b(a+b)^{q-1}$,
\item $a^{q-1}(a\wedge b)=a\wedge b$,
\item $(a\wedge b)^2=a^2\wedge b^2$,
\item $(a+b)(a\wedge b)=0$,
\item $(a\wedge b)^k=a^k+a^k(a+b)^{q-1}=b^k+b^k(a+b)^{q-1}$.
\end{enumerate}
\end{proposition}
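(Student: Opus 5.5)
The plan is to establish (ii) first from the Lemma, because the remaining items follow from it by short calculations. Throughout we use $x^q=x$ and $x+x=0$.

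For (ii), apply the Lemma with $k=n$:
\[
(a+b)^{q-1}=\sum_{i=0}^{q-1}a^ib^{q-1-i}.
\]
Multiplying by $a$ gives
\[
a(a+b)^{q-1}=\sum_{i=0}^{q-1}a^{i+1}b^{q-1-i}=\sum_{j=1}^{q}a^jb^{q-j}=(a\wedge b)+a^q=(a\wedge b)+a,
\]
where the convention $a^0b=b$ is used in the $i=0$ term, and the term $j=q$ equals $a^qb^0$, read as $a^q=a$. Some care with the convention is needed here. When $i=0$ the term is $a\,b^{q-1}$, which is fine. The last term $a^q\cdot b^0$ should be interpreted as $a^q$, and I would formulate this cleanly by writing the sum as $a^q+\sum_{j=1}^{q-1}a^jb^{q-j}$. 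Hence $a\wedge b=a+a(a+b)^{q-1}$. By symmetry of the definition in $i\leftrightarrow q-i$, $\wedge$ is commutative, so the same computation with the roles swapped gives the expression in $b$. That also proves $a\wedge b=b\wedge a$ in (i).

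For (i), $a\wedge a=\sum_{i=1}^{q-1}a^q=(q-1)a=a$, since $q-1$ is odd. Also $a\wedge0=0$, because every term contains a positive power of $0$.

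For (iii), multiply (ii) by $a^{q-1}$: $a^{q-1}(a\wedge b)=a^q+a^q(a+b)^{q-1}=a\wedge b$.

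For (iv), use the Frobenius identity $(x+y)^2=x^2+y^2$ on the defining sum: $(a\wedge b)^2=\sum a^{2i}b^{2(q-i)}=a^2\wedge b^2$.

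For (v), write $c=a+b$. Then, using (ii) and characteristic $2$,
\[
c(a\wedge b)=ca+ac^q=ac+ac=0.
\]

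For (vi), set $e=(a+b)^{q-1}$. Then $e^2=(a+b)^{2q-2}=(a+b)^{q-1}=e$, since $(a+b)^q=a+b$, so $e$ is idempotent and $1+e$ is formally idempotent. Formally $a\wedge b=a(1+e)$; in the non-unitary setting I avoid $1$ by arguing by induction on $k$. The case $k=1$ is (ii). For the inductive step,
\[
(a\wedge b)^{k+1}=\bigl(a^k+a^ke\bigr)\bigl(a+ae\bigr)=a^{k+1}+2a^{k+1}e+a^{k+1}e^2=a^{k+1}+a^{k+1}e.
\]
The version in $b$ follows by symmetry.

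The only delicate step is the index and convention bookkeeping in (ii), specifically the boundary terms $i=0$ and $j=q$. Everything else is routine.
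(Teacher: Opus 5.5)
Your proof is correct and follows the paper's argument essentially step by step. You obtain (ii) from the Lemma with the same boundary-term bookkeeping, derive (iii), (v) and (vi) (the last by the same induction) as short consequences of (ii), and do (i) directly from the definition. The only deviation is in (iv): you apply the Frobenius identity directly to the defining sum, whereas the paper squares the formula from (ii); both routes are equally valid.
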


\begin{proof}
$\text{}$
\begin{enumerate}[(i)]
\item This is immediate.
\item We have
\begin{align*}
a+a(a+b)^{q-1} & =a+a\sum_{i=0}^{q-1}a^ib^{q-1-i}=a+\sum_{i=0}^{q-1}a^{i+1}b^{q-1-i}=a+\sum_{j=1}^qa^jb^{q-j}= \\
         	   & =a+\sum_{j=1}^{q-1}a^jb^{q-j}+a^q=\sum_{j=1}^{q-1}a^jb^{q-j}=a\wedge b.
\end{align*}
The last formula follows by symmetry of $a$ and $b$.	
\item Using (ii) we obtain
\begin{align*}
a^{q-1}(a\wedge b) & =a^{q-1}\big(a+a(a+b)^{q-1}\big)=a^q+a^q(a+b)^{q-1}=a+a(a+b)^{q-1}= \\
	               & =a\wedge b.	
\end{align*}
\item Using (ii) we obtain
\begin{align*}
(a\wedge b)^2 & =\big(a+a(a+b)^{q-1}\big)^2=a^2+a^2(a+b)^{2^{n+1}-2}=a^2+a^2(a^2+b^2)^{q-1}= \\
	          & =a^2\wedge b^2.	
\end{align*}
\item Using (ii) we obtain
\begin{align*}
(a+b)(a\wedge b) & =(a+b)\big(a+a(a+b)^{q-1}\big)=(a+b)a+a(a+b)^q= \\
	             & =a(a+b)+a(a+b)=0.	
\end{align*}
\item We use induction on $k$. The case $k=1$ follows from (ii). Now assume that $k$ is a positive integer and (vi) holds for $k$. Then again by (ii)
\begin{align*}
(a\wedge b)^{k+1} & =(a\wedge b)^k(a\wedge b)=\big(a^k+a^k(a+b)^{q-1}\big)\big(a+a(a+b)^{q-1}\big)= \\
                  & =a^{k+1}+a^{k+1}(a+b)^{q-1}+a^{k+1}(a+b)^{q-1}+a^{k+1}(a+b)^{2q-2}= \\
                  & =a^{k+1}+a^{k+1}(a+b)^{q-1}.
\end{align*}
The last formula follows by symmetry of $a$ and $b$.	
\end{enumerate}
\end{proof}	

Now we are able to state the main result of this section.

\begin{theorem}
The groupoid $(R,\wedge)$ is a meet-semilattice with smallest element $0$ whose induced order $\le$ is given by $x\le y$ if $xy=x^2$ {\rm(}$x,y\in R${\rm)}.
\end{theorem}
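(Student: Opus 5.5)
The plan is to prove associativity by reducing to finite fields, where $\wedge$ has a very simple description; idempotency and commutativity are already in Proposition~\ref{prop2}(i). The order description and the fact that $0$ is the least element will be checked directly.

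\emph{Reduction to fields.} First, $\mathbf R$ is reduced: if $x^2=0$ then $x=x^q=0$. A commutative reduced ring is a subdirect product of the integral domains $R/P$, where $P$ ranges over the prime ideals of $R$; this does not require a unit. Each $R/P$ again satisfies $x^q\approx x$. Take $\bar x\neq 0$ in $R/P$. From $\bar x(\bar x^{q-1}\bar y-\bar y)=\bar x^q\bar y-\bar x\bar y=0$ we get $\bar x^{q-1}\bar y=\bar y$ for every $\bar y$. Hence $\bar x^{q-1}$ is an identity element and $\bar x^{q-2}$ is an inverse of $\bar x$, so $R/P$ is a field of characteristic $2$. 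Because $\wedge$ is a term operation, an identity in $\wedge$ holds in $R$ as soon as it holds in every factor $R/P$. So it suffices to prove associativity in a field $F$ satisfying $x^q\approx x$.

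\emph{Computation in a field.} By Proposition~\ref{prop2}(ii), $a\wedge b=a+a(a+b)^{q-1}$. In $F$, $(a+b)^{q-1}=1$ if $a\neq b$, and $(a+b)^{q-1}=0$ if $a=b$. Therefore $a\wedge b=a$ if $a=b$, and $a\wedge b=0$ otherwise. It follows that $(a\wedge b)\wedge c$ and $a\wedge(b\wedge c)$ both equal $a$ when $a=b=c$. In every other case both equal $0$: if $a=b\neq c$, the left side is $a\wedge c=0$ and the right side is $a\wedge 0=0$; the remaining cases are similar, using $x\wedge 0=0$. This establishes associativity in $F$, hence in $R$, so $(R,\wedge)$ is a meet-semilattice.

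\emph{Induced order.} The induced order is $a\le b$ iff $a\wedge b=a$. By Proposition~\ref{prop2}(ii) this is equivalent to $a(a+b)^{q-1}=0$. If $a(a+b)=0$, then $a(a+b)^{q-1}=0$, since $q-1\ge1$. Conversely, if $a(a+b)^{q-1}=0$, multiplying by $a+b$ gives $0=a(a+b)^q=a(a+b)$. Thus $a\le b$ iff $a^2+ab=0$, i.e.\ (in characteristic $2$) iff $ab=a^2$. Finally $0\cdot b=0=0^2$, so $0\le b$ for all $b$, and $0$ is the smallest element.

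The main obstacle is associativity: a direct symbolic expansion of $(a\wedge b)\wedge c$ would be messy. The subdirect-decomposition argument avoids it, and the only point needing care is that it works in the non-unitary setting. That point is settled by the observation above that $\bar x^{q-1}$ acts as an identity in each $R/P$.
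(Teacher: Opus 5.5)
Your proof is correct, but it proves associativity by a genuinely different route from the paper.

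The paper stays entirely inside the equational theory of $\mathbf R$. It expands $(a\wedge b)\wedge c$ using Proposition~\ref{prop2}(vi) and formula (1), and reduces exponents $\ge q$ via $a^q=a$. It then rearranges the result into an expression visibly symmetric in $a$ and $c$, so that commutativity gives $(a\wedge b)\wedge c=(c\wedge b)\wedge a=a\wedge(b\wedge c)$.

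You instead embed $R$ subdirectly into the quotients $R/P$ by prime ideals and show each $R/P$ is a field. Your remark that $\bar x^{q-1}$ acts as an identity element handles the non-unital case correctly. In fact that remark alone already gives $a\wedge b=a$ if $a=b$ and $a\wedge b=0$ otherwise. So you do not even need inverses, which sidesteps the degenerate exponent $q-2=0$ when $n=1$. In this ``flat'' situation associativity is a trivial case check.

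The two routes buy different things:
\begin{itemize}
\item The paper's computation is choice-free and purely formal, but index-heavy and error-prone.
\item Your argument needs enough prime ideals. This comes from the usual Zorn's lemma argument, which works without a unit.
\item Your argument is conceptual. It explains at once why the induced order is componentwise of the form $a_P\in\{0,b_P\}$, which is what the paper proves separately in parts (i)--(ii) of the subsequent lemma.
\item It also anticipates the subdirect decomposition the paper carries out only later, in Proposition~\ref{prop3}, and there only for unitary rings.
\end{itemize}

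Your treatment of the induced order and of the least element $0$ coincides with the paper's. You usefully supply the justification of $a(a+b)^{q-1}=0\Leftrightarrow a(a+b)=0$ (multiply by $a+b$ and use $(a+b)^q=a+b$), which the paper leaves implicit.
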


\begin{proof}
Let $a,b,c\in R$. According to Proposition~\ref{prop2} (vi) and (1) we have
\begin{align*}
(a\wedge b)\wedge c & =\sum_{i=1}^{q-1}(a\wedge b)^ic^{q-i}=\sum_{i=1}^{q-1}\big(a^i+a^i(a+b)^{q-1}\big)c^{q-i}= \\
                    & =                  \sum_{i=1}^{q-1}a^ic^{q-i}+(a+b)^{q-1}\sum_{i=1}^{q-1}a^ic^{q-i}=(a\wedge c)+(a+b)^{q-1}(a\wedge c)= \\
                    & =(a\wedge c)+\left(\sum_{j=0}^{q-1}a^jb^{q-1-j}\right)\left(\sum_{k=1}^{q-1}a^kc^{q-k}\right)= \\
                    & =(a\wedge c)+\sum_{j=0}^{q-1}b^{q-1-j}\sum_{k=1}^{q-1}a^{j+k}c^{q-k}= \\
                    & =(a\wedge c)+\sum_{j=0}^{q-1}b^{q-1-j}\left(\sum_{k=1}^{q-1-j}a^{j+k}c^{q-k}+\sum_{k=q-j}^{q-1}a^{j+k}c^{q-k}\right)= \\
                    & =(a\wedge c)+\sum_{j=0}^{q-1}b^{q-1-j}\left(a^jc^j\sum_{k=1}^{q-j-1}a^kc^{q-j-k}+\sum_{k=q-j}^{q-1}a^{j+k-q+1}c^{q-k}\right)= \\
                    & =(a\wedge c)+\sum_{j=0}^{q-1}b^{q-1-j}\left(a^jc^j\sum_{k=1}^{q-j-1}a^kc^{q-j-k}+\sum_{m=1}^ja^mc^{j+1-m}\right)
\end{align*}
which is symmetric in $a$ and $c$. Hence we have
\[
(a\wedge b)\wedge c=(c\wedge b)\wedge a=a\wedge(b\wedge c).
\]
That $(R,\wedge)$ is a meet-semilattice with smallest element $0$ now follows by applying (i) of Proposition~\ref{prop2}. Finally, the following are equivalent: $a\le b$, $a\wedge b=a$, $a+a(a+b)^{q-1}=a$, $a(a+b)^{q-1}=0$, $a(a+b)=0$, $a^2+ab=0$, $ab=a^2$.	
\end{proof}

In the following let $\mathbb F_q=(F_q,+,\cdot)$ denote the $q$-element field. For the theory of finite fields cf.\ the monograph \cite{LN}.

\begin{lemma}
Let $a,b,c\in R$. Then the following hold:
\begin{enumerate}[{\rm(i)}]
\item If $\mathbf R=\mathbb F_q$ then $a\le b$ if and only if $a\in\{0,b\}$,
\item if $\mathbf R=(\mathbb F_q)^I$ then $(a_i;i\in I)\le(b_i;i\in I)$ if and only if for all $i\in I$ we have $a_i\in\{0,b_i\}$,
\item if $\mathbf R$ is unitary and $a\in R\setminus\{1\}$ then $a$ is not the greatest element of $(R,\le)$,
\item if $\mathbf R$ is unitary then $1$ is the greatest element of $(R,\le)$ if and only if $\mathbf R$ satisfies the identity $x^2\approx x$,
\item $a\le b$ implies $ac\le bc$,
\item if $a\le b$ then $a+c\le b+c$ if and only if $ac=bc$,
\item if $\mathbf R$ is unitary and $a\le b$ then $b+1\le a+1$ if and only if $(a+b)^2=a+b$.
\end{enumerate}	
\end{lemma}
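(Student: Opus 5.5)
Everything rests on the characterization from the preceding Theorem: $x\le y$ if and only if $xy=x^2$, equivalently $x(x+y)=0$. Each item then becomes a short ring computation.

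For (i): in the field $\mathbb F_q$, $a(a+b)=0$ means $a=0$ or $a=b$. For (ii): the operations of $(\mathbb F_q)^I$ are componentwise, so $ab=a^2$ holds exactly when $a_ib_i=a_i^2$ for every $i\in I$, and (i) applies in each coordinate.

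For (iii) and (iv), the key observation is that $1\le a$ means $1\cdot a=1^2$, i.e.\ $a=1$. So if $a\ne 1$, then $1\not\le a$, and $a$ is not the greatest element. For (iv), $1$ is the greatest element iff $x\le 1$ for all $x$, i.e.\ $x\cdot 1=x^2$ for all $x$, i.e.\ $x^2\approx x$.

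For (v): from $ab=a^2$, commutativity gives $(ac)(bc)=abc^2=a^2c^2=(ac)^2$.

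For (vi): we have $a+c\le b+c$ iff $(a+c)(a+b)=0$, i.e.\ $a(a+b)+c(a+b)=0$. Since $a\le b$ gives $a(a+b)=0$, this reduces to $c(a+b)=0$, i.e.\ $ac=bc$ in characteristic $2$.

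For (vii): $b+1\le a+1$ iff $(b+1)\big((b+1)+(a+1)\big)=0$, i.e.\ $(b+1)(a+b)=0$. Expanding gives $ab+b^2+a+b=0$. Using $ab=a^2$ from $a\le b$, this becomes $a^2+b^2+a+b=0$, i.e.\ $(a+b)^2=a+b$, by the Frobenius identity $(a+b)^2=a^2+b^2$.

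All seven items follow from this direct expansion. The only point requiring care is keeping the orientation of the relation straight, so that the hypothesis $a\le b$ is substituted in the form $ab=a^2$ at the right step of (vi) and (vii).
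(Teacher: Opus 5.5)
Your proposal is correct and takes essentially the same route as the paper: every item is a direct ring computation from the characterization $x\le y$ if and only if $xy=x^2$. Rewriting this as $x(x+y)=0$ in (vi) and (vii) is only cosmetic. The one small variation is in (iii), where you use $1\not\le a$ (since $1\cdot a=1^2$ forces $a=1$) as the witness, whereas the paper shows $a+1\not\le a$ via $(a+1)a=(a+1)^2$ if and only if $a=1$; both work, and yours is marginally shorter.
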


\begin{proof}
\begin{enumerate}[(i)]
\item If $\mathbf R=\mathbb F_q$ then the following are equivalent: $a\le b$, $ab=a^2$, $a^2+ab=0$, $a(a+b)=0$, $0\in\{a,a+b\}$, $a=0$ or $a=b$, $a\in\{0,b\}$.
\item follows from (i).
\item If $\mathbf R$ is unitary then the following are equivalent: $a+1\le a$, $(a+1)a=(a+1)^2$, $a^2+a=a^2+1$, $a=1$.
\item If $\mathbf R$ is unitary then the following are equivalent: $a\le1$, $a\cdot1=a^2$, $a^2=a$.
\item Any of the following statements implies the next one: $a\le b$, $ab=a^2$, $(ac)(bc)=abc^2=a^2c^2=(ac)^2$, $ac\le bc$.
\item Under the assumption $a\le b$ the following are equivalent: $a+c\le b+c$, $(a+c)(b+c)=(a+c)^2$, $ab+ac+bc+c^2=a^2+c^2$, $ac+bc=0$, $ac=bc$.
\item Under the assumption $a\le b$ the following are equivalent: $b+1\le a+1$, $(b+1)(a+1)=(b+1)^2$, $ab+a+b+1=b^2+1$, $a^2+a+b=b^2$, $a^2+b^2=a+b$, $(a+b)^2=a+b$.
\end{enumerate}
\end{proof}

From (i) we see that in the meet-semilattice $(F_q,\le)$ the non-zero elements form an antichain.

Let $\mathcal V_n$ denote the variety of unitary commutative rings of characteristic $2$ satisfying the identity $x^{2^n}\approx x$.

The term operation from Definition~\ref{def1} is not distributive with respect to $+$ in general. E.g., for $n>1$, this term operation is not distributive with respect to $+$ in $\mathbb F_{2^n}$. Consequently, it is not a distributive term operation throughout the variety $\mathcal V_n$.

\begin{example}
Let $n>1$ and $a$ and $b$ be two distinct elements of $F_{2^n}\setminus\{0\}$. Then $0,a,b,a+b$ are pairwise distinct and
\[
(a+b)\wedge(a+b)=a+b\ne0=0+0=\big(a\wedge(a+b)\big)+\big(b\wedge(a+b)\big).
\]
For example, $\mathbb F_4\cong(\mathbb Z_2[x])/(x^2+x+1)$. We write $q(x)$ instead of $[q(x)]$. Now $x$ and $1$ are two distinct elements of $F_4\setminus\{0\}$, the operation table of $\wedge$ is as follows:
\[
\begin{array}{c|c|c|c|c}
\wedge & 0 & 1 & x & x+1 \\
\hline
   0   & 0 & 0 & 0 &  0 \\
\hline
   1   & 0 & 1 & 0 &  0 \\	
\hline
   x   & 0 & 0 & x &  0 \\	
\hline
  x+1  & 0 & 0 & 0 & x+1
\end{array}
\]
and we have
\[
(x+1)\wedge(x+1)=x+1\ne0=0+0=\big(x\wedge(x+1)\big)+\big(1\wedge(x+1)\big).
\]
\end{example}

\section{Derived lattice structure}

As we have seen above, the semilattice structure derived from the ring satisfying the identity $x^{2^n}\approx x$ is of a very special shape. The aim of this section is to involve another term operation for $\wedge$ which is distributive with respect to $+$ and, using Theorem~\ref{th1}, we obtain a way to derive a lattice structure which, in fact, will be a Boolean algebra.

A useful result for our next considerations is the following lemma.

\begin{lemma}\label{lem4}
Let $(R,+,\cdot)$ be a commutative ring of characteristic $2$ and $I$ a finite subset of $(\mathbb N_0)^2$, define a binary operation $\wedge$ on $R$ by
\[
x\wedge y:=\sum_{(i,j)\in I}x^{2^i}y^{2^j}
\]
for all $x,y\in R$ and assume $\wedge$ to be commutative. Then $\wedge$ is distributive with respect to $+$.
\end{lemma}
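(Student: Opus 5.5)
The plan is to use the Frobenius additivity $(x+y)^{2^k}=x^{2^k}+y^{2^k}$ recalled at the start of Section~2, together with commutativity of $\wedge$ to move the sum into the second argument. Each summand $x^{2^i}y^{2^j}$ is additive in $x$ for fixed $y$, so $\wedge$ is additive in the first variable. Commutativity then gives additivity in the second variable, which is exactly the form $(x+y)\wedge z=(x\wedge z)+(y\wedge z)$ required in the definition.

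Concretely, I would fix $a,b,c\in R$ and compute
\begin{align*}
(a+b)\wedge c&=\sum_{(i,j)\in I}(a+b)^{2^i}c^{2^j}=\sum_{(i,j)\in I}\big(a^{2^i}+b^{2^i}\big)c^{2^j}\\
&=\sum_{(i,j)\in I}a^{2^i}c^{2^j}+\sum_{(i,j)\in I}b^{2^i}c^{2^j}=(a\wedge c)+(b\wedge c).
\end{align*}
The second equality uses $(a+b)^{2^i}=a^{2^i}+b^{2^i}$ for $i\in\mathbb N_0$. The third uses distributivity of multiplication in $\mathbf R$ and the fact that $I$ is finite, so the sums may be rearranged.

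In fact this computation already yields the defining identity of distributivity with respect to $+$ directly. The commutativity hypothesis is only needed so that $\wedge$ qualifies as a commutative operation in the sense of the definition, and so that the mirror identity $z\wedge(x+y)=(z\wedge x)+(z\wedge y)$ also holds. One subtlety is the exponent $2^0=1$, which must be covered by the Frobenius formula; it is, since the case $k=0$ is trivial.

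I expect no real obstacle. The only point requiring care is to note that the Frobenius identity needs both commutativity of the ring and characteristic $2$, and both are assumed in the statement.
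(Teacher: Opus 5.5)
Your proof is correct and is exactly the paper's argument: Frobenius additivity $(a+b)^{2^i}=a^{2^i}+b^{2^i}$, then distributivity of multiplication and splitting the finite sum to get $(a\wedge c)+(b\wedge c)$. One small slip: your opening paragraph says commutativity is what yields the identity $(x+y)\wedge z=(x\wedge z)+(y\wedge z)$. As you note yourself later, the computation gives that identity directly, and commutativity is needed only so that $\wedge$ qualifies under the paper's definition.
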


\begin{proof}
Using the remark preceding Theorem~\ref{th1} we have for $a,b,c\in R$
\begin{align*}
(a+b)\wedge c & =\sum_{(i,j)\in I}(a+b)^{2^i}c^{2^j}=\sum_{(i,j)\in I}(a^{2^i}+b^{2^i})c^{2^j}=\sum_{(i,j)\in I}a^{2^i}c^{2^j}+\sum_{(i,j)\in I}b^{2^i}c^{2^j}= \\
              & =(a\wedge c)+(b\wedge c).
\end{align*}	
\end{proof}

In what follows we use important results on finite fields (see \cite{LN}). First, we recall the concept of the {\em trace} of an element of a finite field.

For all $x\in F_q$ let
\[
\Tr(x):=\sum_{i=0}^{n-1}x^{2^i}
\]
denote the trace of $x$. Since according to the remark preceding Theorem~\ref{th1}
\[
\big(\Tr(x)\big)^2=(\sum_{i=0}^{n-1}x^{2^i})^2=\sum_{i=0}^{n-1}(x^{2^i})^2=\sum_{i=0}^{n-1}x^{2^{i+1}}=\sum_{i=0}^{n-1}x^{2^i}=\Tr(x)
\]
for all $x\in F_q$, we have $\Tr(x)\in\{0,1\}$ for all $x\in F_q$. It is well-known that the field $\mathbb F_q$ can be considered as an $n$-dimensional vector space over the two-element subfield $\mathbb F_2=(\{0,1\},+,\cdot)$ of $\mathbb F_q$. According to the well-known {\em Normal Basis Theorem} (cf.\ \cite{LN}, Theorem~2.35) there exists a so-called {\em normal basis} $B$ of $\mathbb F_q$, that is a basis of the form
\[
\{\alpha^{2^0},\dots,\alpha^{2^{n-1}}\}.
\]
Moreover, it is well-known (cf.\ \cite{LN}, Exercise~2.43) that there exists a {\em normal basis}
\[
\{\beta^{2^0},\dots,\beta^{2^{n-1}}\}
\]
of $\mathbb F_q$ that is {\em dual} to $B$ which means that
\[
\Tr(\alpha^{2^i}\beta^{2^j})=\delta_{ij}
\]
for all $i,j=0,\dots,n-1$.

For $i\in\mathbb N_0$ we abbreviate $\alpha^{2^i}$ by $\alpha_i$ and $\beta^{2^i}$ by $\beta_i$.

Now, we can construct the binary term operation $\wedge$. On $F_q$ we define a binary operation $\wedge$ by
\[
x\wedge y:=\sum_{i,j=0}^{n-1}\Tr(\alpha\beta_i\beta_j)x^{2^i}y^{2^j}
\]
for all $x,y\in F_q$. A more explicit form for $\wedge$ is as follows.

\begin{lemma}\label{lem1}
We have
\[
x\wedge y=x^{2^{n-1}}y^{2^{n-1}}+\sum_{\substack{i,j=0 \\ i<j}}^{n-1}\Tr(\alpha\beta_i\beta_j)(x^{2^i}y^{2^j}+x^{2^j}y^{2^i})=\sum_{k=0}^{n-1}\Tr(\beta_kx)\Tr(\beta_ky)\alpha_k
\]
for all $x,y\in F_q$.
\end{lemma}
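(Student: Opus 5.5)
We prove the two equalities separately: the first comes from splitting the defining double sum into diagonal and off-diagonal parts, and the second from expanding the product of traces.

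\emph{First equality.} Split $\sum_{i,j=0}^{n-1}\Tr(\alpha\beta_i\beta_j)x^{2^i}y^{2^j}$ into the diagonal terms $i=j$ and the pairs $\{i,j\}$ with $i<j$. The coefficient $\Tr(\alpha\beta_i\beta_j)$ is symmetric in $i,j$. Hence the off-diagonal part is exactly $\sum_{i<j}\Tr(\alpha\beta_i\beta_j)(x^{2^i}y^{2^j}+x^{2^j}y^{2^i})$.

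For the diagonal, note that $\Tr(z^2)=\Tr(z)$ for all $z\in F_q$, since squaring permutes the summands $z^{2^i}$ cyclically because $z^{2^n}=z$. Write $i\equiv m+1 \pmod n$, so that $\beta_i^2=\beta_{i+1}$ with indices taken mod $n$. Then
\[
\Tr(\alpha\beta_i^2)=\Tr(\alpha\beta_{i+1})=\delta_{0,i+1}
\]
by duality, where $\alpha=\alpha_0$. This equals $1$ exactly when $i+1\equiv 0\pmod n$, i.e.\ $i=n-1$. So the diagonal contributes only $x^{2^{n-1}}y^{2^{n-1}}$, which gives the first expression.

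\emph{Second equality.} Expand the right-hand side:
\[
\Tr(\beta_kx)\Tr(\beta_ky)=\sum_{i,j}\beta_k^{2^i}\beta_k^{2^j}x^{2^i}y^{2^j}=\sum_{i,j}\beta_{k+i}\beta_{k+j}x^{2^i}y^{2^j},
\]
with indices mod $n$. After multiplying by $\alpha_k$ and summing over $k$, the coefficient of $x^{2^i}y^{2^j}$ is $\sum_k\alpha_k\beta_{k+i}\beta_{k+j}$. We need this to equal $\Tr(\alpha\beta_i\beta_j)$. Using $\alpha_k=\alpha^{2^k}$ and $\beta_{k+i}=\beta_i^{2^k}$, each summand is $(\alpha\beta_i\beta_j)^{2^k}$. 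Summing over $k=0,\dots,n-1$ gives $\Tr(\alpha\beta_i\beta_j)$ by definition. The index shifts are legitimate since $\beta^{2^n}=\beta$.

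\emph{Main obstacle.} The only delicate step is the index bookkeeping mod $n$, in particular identifying which diagonal term survives. The duality relation pins it down as $i=n-1$, because $\beta_{n-1}^2=\beta_0$ and $\Tr(\alpha_0\beta_0)=1$.
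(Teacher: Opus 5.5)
Your proof is correct and follows the paper's own argument. The diagonal terms are handled via $\Tr(\alpha\beta_i^2)=\Tr(\alpha_0\beta_{i+1})=\delta_{i,n-1}$, and the second equality comes from writing $\Tr(\alpha\beta_i\beta_j)=\sum_k\alpha_k\beta_{i+k}\beta_{j+k}$ and regrouping into $\sum_k\alpha_k\Tr(\beta_kx)\Tr(\beta_ky)$; you only run this regrouping in the opposite direction, and your remark $\Tr(z^2)=\Tr(z)$ is never used and can be dropped.
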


\begin{proof}
The first formula follows from
\[
\Tr(\alpha\beta_i\beta_i)=\Tr(\alpha_0\beta_{i+1})=\delta_{i,n-1}
\]
for $i=0,\dots,n-1$. Concerning the second formula we have
\begin{align*}
x\wedge y & =\sum_{i,j=0}^{n-1}\Tr(\alpha\beta_i\beta_j)x^{2^i}y^{2^j}=\sum_{i,j=0}^{n-1}\sum_{k=0}^{n-1}(\alpha\beta_i\beta_j)^{2^k}x^{2^i}y^{2^j}=\sum_{k=0}^{n-1}\alpha_k\sum_{i=0}^{n-1}\beta_{i+k}x^{2^i}\sum_{j=0}^{n-1}\beta_{j+k}y^{2^j}= \\
          & =\sum_{k=0}^{n-1}\Tr(\beta_kx)\Tr(\beta_ky)\alpha_k
\end{align*}
for all $x,y\in F_q$.
\end{proof}

\begin{lemma}\label{lem5}
In the field $\mathbb F_q$ we have $\alpha_i\wedge\alpha_j=\delta_{ij}\alpha_i$ for all $i,j=0,\dots,n-1$.	
\end{lemma}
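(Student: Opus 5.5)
The plan is to use the second formula of Lemma~\ref{lem1},
\[
x\wedge y=\sum_{k=0}^{n-1}\Tr(\beta_kx)\Tr(\beta_ky)\alpha_k ,
\]
and substitute $x=\alpha_i$, $y=\alpha_j$. The whole computation then reduces to evaluating the traces $\Tr(\beta_k\alpha_i)$ and $\Tr(\beta_k\alpha_j)$, and these are fixed by the duality of the two normal bases.

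First, by the defining property of the dual basis, $\Tr(\alpha_i\beta_k)=\delta_{ik}$ for all $i,k\in\{0,\dots,n-1\}$. Here the index convention must match: we have $\alpha_i=\alpha^{2^i}$ and $\beta_k=\beta^{2^k}$, which is exactly how the duality relation was stated. So, using commutativity of multiplication in $F_q$,
\[
\Tr(\beta_k\alpha_i)=\delta_{ik}\quad\text{and}\quad \Tr(\beta_k\alpha_j)=\delta_{jk}.
\]
Substituting, we get
\[
\alpha_i\wedge\alpha_j=\sum_{k=0}^{n-1}\delta_{ik}\delta_{jk}\alpha_k .
\]
The only surviving term is the one with $k=i=j$. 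Hence the sum equals $\alpha_i$ when $i=j$ and $0$ otherwise, which is $\delta_{ij}\alpha_i$.

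The only point needing care is that Lemma~\ref{lem1} is applied correctly. In its derivation the indices are read modulo $n$, which is legitimate because $\alpha_{k+n}=\alpha^{2^{k+n}}=\alpha_k$ holds since $x^{q}=x$ in $F_q$, and likewise for $\beta$. Given Lemma~\ref{lem1}, nothing else is an obstacle: the result is a direct consequence of the dual basis relations.
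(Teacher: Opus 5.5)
Your proof is correct and is exactly the paper's argument: substitute $x=\alpha_i$, $y=\alpha_j$ into the second formula of Lemma~\ref{lem1} and use the dual-basis relations $\Tr(\beta_k\alpha_i)=\delta_{ki}$ to collapse the sum to $\delta_{ij}\alpha_i$. Your side remark that indices may be read modulo $n$ because $\alpha^{2^n}=\alpha$ is also correct; it is what implicitly justifies the reindexing in the proof of Lemma~\ref{lem1}.
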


\begin{proof}
According to Lemma~\ref{lem1} we have for all $i,j=0,\dots,n-1$
\[
\alpha_i\wedge\alpha_j=\sum_{k=0}^{n-1}\Tr(\beta_k\alpha_i)\Tr(\beta_k\alpha_j)\alpha_k=\sum_{k=0}^{n-1}\delta_{ki}\delta_{kj}\alpha_k=\delta_{ij}\alpha_i.
\]
\end{proof}

For $I\subseteq\{0,\dots,n-1\}$ put
\[
x(I):=\sum_{i\in I}\alpha_i.
\]

\begin{lemma}\label{lem6}
Assume $I,J\subseteq\{0,\dots,n-1\}$. Then in $\mathbb F_q$ we have $x(\{0,\dots,n-1\})=1$ and $x(I)\wedge x(J)=x(I\cap J)$.	
\end{lemma}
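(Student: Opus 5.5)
The plan is to prove the two claims separately, using bilinearity of $\wedge$ together with Lemma~\ref{lem5}.

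\emph{First claim: $x(\{0,\dots,n-1\})=1$.} We have $x(\{0,\dots,n-1\})=\sum_{i=0}^{n-1}\alpha^{2^i}=\Tr(\alpha)$, and by the remark on traces this lies in $\{0,1\}$. To rule out $0$, expand $1$ in the normal basis as $1=\sum_i c_i\alpha_i$ with $c_i\in\{0,1\}$. Since $1^2=1$ and squaring permutes the $\alpha_i$ cyclically ($\alpha_i^2=\alpha_{i+1}$, indices mod $n$ because $\alpha^{2^n}=\alpha$), uniqueness of coordinates forces $c_i=c_{i+1}$ for all $i$. So the $c_i$ are constant, and as $1\ne0$ they all equal $1$. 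Hence $1=\sum_i\alpha_i=x(\{0,\dots,n-1\})$.

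An alternative route uses the explicit formula of Lemma~\ref{lem1}: the coordinate of $x$ with respect to $\alpha_k$ is $\Tr(\beta_kx)$, by duality $\Tr(\alpha_i\beta_k)=\delta_{ik}$ and $\mathbb F_2$-linearity of $\Tr$. We then only need $\Tr(\beta_k)=1$. By the same squaring argument, $\Tr(\beta_k)=\Tr(\beta)$, since the trace is invariant under squaring. If $\Tr(\beta)$ were $0$, then $1$ would have all coordinates zero, which is absurd.

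\emph{Second claim: $x(I)\wedge x(J)=x(I\cap J)$.} The operation $\wedge$ has the form of Lemma~\ref{lem4} (coefficients in $F_q$ do not affect the argument) and is commutative, as is visible from Lemma~\ref{lem1}. Hence it is distributive with respect to $+$ in both arguments. Expanding and applying Lemma~\ref{lem5} gives
\[
x(I)\wedge x(J)=\sum_{i\in I}\sum_{j\in J}\alpha_i\wedge\alpha_j=\sum_{i\in I}\sum_{j\in J}\delta_{ij}\alpha_i=\sum_{i\in I\cap J}\alpha_i=x(I\cap J).
\]

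The only step needing care is showing that $\Tr(\alpha)\neq0$, and the cyclic-symmetry argument in the first claim handles it. The second claim is routine.
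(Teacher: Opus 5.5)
Your proof is correct. The second claim is handled exactly as in the paper: commutativity from Lemma~\ref{lem1}, distributivity from Lemma~\ref{lem4}, then Lemma~\ref{lem5}. Incidentally, the coefficients $\Tr(\alpha\beta_i\beta_j)$ already lie in $\{0,1\}$, so $\wedge$ is literally of the form in Lemma~\ref{lem4}; your remark about coefficients in $F_q$ is not needed.

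For the first claim you take a different route from the paper. You expand $1=\sum_i c_i\alpha_i$, then use that squaring fixes $1$ and shifts the normal basis cyclically. Uniqueness of coordinates then forces all $c_i$ to be equal, hence all equal to $1$.

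The paper argues in one line. Since $x(\{0,\dots,n-1\})=\sum_i\alpha_i=\Tr(\alpha)$ is a combination of linearly independent elements with all coefficients $1$, it is nonzero. Because $\Tr(\alpha)\in\{0,1\}$, it must equal $1$.

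What your argument buys is that it does not use the fact that trace values lie in $\{0,1\}$. It also explains directly why $1$ has the all-ones coordinate vector in any normal basis. The cost is that it is more work than necessary. In particular, your closing remark that $\Tr(\alpha)\neq0$ is ``the only step needing care'' overstates the difficulty, since nonvanishing is immediate from linear independence. Your alternative via the coordinates $\Tr(\beta_k x)$ is also valid, but it is similarly roundabout.
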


\begin{proof}
Since $\alpha_0,\dots,\alpha_{n-1}$ are linearly independent, $\Tr(\alpha)$ cannot be equal to $0$. Therefore $\Tr(\alpha)=1$, i.e.\ $x(\{0,\dots,n-1\})=1$. Because of Lemma~\ref{lem1}, $\wedge$ is commutative and by Lemma~\ref{lem4}, it is distributive with respect to $+$. Now using Lemma~\ref{lem5} we obtain
\[
x(I)\wedge x(J)=(\sum_{i\in I}\alpha_i)\wedge(\sum_{j\in J}\alpha_j)=\sum_{(i,j)\in I\times J}(\alpha_i\wedge\alpha_j)=\sum_{(i,j)\in I\times J}\delta_{ij}\alpha_i=\sum_{k\in I\cap J}\alpha_k=x(I\cap J).
\]
\end{proof}

\begin{proposition}\label{prop1}
The operation $\wedge$ on $F_q$ is idempotent, commutative and associative and satisfies $x\wedge1=x$ for all $x\in F_q$.
\end{proposition}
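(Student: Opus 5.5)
The plan is to transport $\wedge$ to the Boolean algebra of subsets of $\{0,\dots,n-1\}$ via the normal basis $\alpha_0,\dots,\alpha_{n-1}$. Since these elements form a basis of $\mathbb F_q$ over $\mathbb F_2$, every $x\in F_q$ can be written uniquely as $x=\sum_{i\in I}\alpha_i=x(I)$ for some $I\subseteq\{0,\dots,n-1\}$. Hence the map
\[
I\mapsto x(I)
\]
is a bijection from the power set of $\{0,\dots,n-1\}$ onto $F_q$. This is the only point where linear independence and spanning of the normal basis are used.

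By Lemma~\ref{lem6} this bijection satisfies $x(I)\wedge x(J)=x(I\cap J)$, so it is an isomorphism from $(2^{\{0,\dots,n-1\}},\cap)$ onto $(F_q,\wedge)$. Idempotency, commutativity and associativity of $\cap$ therefore carry over directly. Explicitly, for $x=x(I)$, $y=x(J)$ and $z=x(K)$ we get:
\begin{itemize}
\item $x\wedge x=x(I\cap I)=x$;
\item $x\wedge y=x(I\cap J)=x(J\cap I)=y\wedge x$;
\item $(x\wedge y)\wedge z=x(I\cap J)\wedge x(K)=x\big((I\cap J)\cap K\big)=x\big(I\cap(J\cap K)\big)=x\wedge(y\wedge z)$.
\end{itemize}

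For the identity $x\wedge1=x$, Lemma~\ref{lem6} gives $1=x(\{0,\dots,n-1\})$. Thus $x(I)\wedge1=x(I\cap\{0,\dots,n-1\})=x(I)$.

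There is no serious obstacle, since the substantive work (distributivity via Lemma~\ref{lem4}, $\Tr(\alpha)=1$, and $\alpha_i\wedge\alpha_j=\delta_{ij}\alpha_i$) is already contained in Lemmas~\ref{lem5} and~\ref{lem6}. The one point to state explicitly is that every element of $F_q$ has the form $x(I)$, which is exactly the spanning property of the normal basis $B$.
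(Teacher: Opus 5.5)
Your proposal is correct and is essentially the paper's own proof: write every element of $F_q$ uniquely as $x(I)$ using the normal basis, then apply Lemma~\ref{lem6} ($x(I)\wedge x(J)=x(I\cap J)$ and $1=x(\{0,\dots,n-1\})$). Idempotency, commutativity, associativity and $x\wedge1=x$ then follow from the corresponding properties of $\cap$, exactly as in the paper's four displayed computations.
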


\begin{proof}
Since $\{\alpha^{2^0},\dots,\alpha^{2^{n-1}}\}$ forms a basis of $\mathbb F_q$, every element of $F_q$ can be (uniquely) written in the form $x(I)$ for some subset $I$ of $\{0,\dots,n-1\}$. Using Lemma~\ref{lem6} we obtain
\begin{align*}
                     x(I)\wedge x(I) & =x(I\cap I)=x(I), \\
                     x(I)\wedge x(J) & =x(I\cap J)=x(J\cap I)=x(J)\wedge x(I), \\	
\big(x(I)\wedge x(J)\big)\wedge x(K) & =x(I\cap J)\wedge x(K)=x\big((I\cap J)\cap K\big)=x\big(I\cap(J\cap K)\big)= \\
                                     & =x(I)\wedge x(J\cap K)=x(I)\wedge\big(x(J)\wedge x(K)\big), \\
                         x(I)\wedge1 & =x(I)\wedge x(\{0,\dots,n-1\})=x(I\cap\{0,\dots,n-1\})=x(I)                                     
\end{align*}
for all $I,J,K\subseteq\{0,\dots,n-1\}$.
\end{proof}

For a given finite field $\mathbb F_q$ with $q=2^n$ we have now the following result.

\begin{theorem}\label{th2}
If for $x,y\in F_q$ we define
\begin{align*}
x\wedge y & :=\sum_{i,j=0}^{n-1}\Tr(\alpha\beta_i\beta_j)x^{2^i}y^{2^j}, \\
       x' & :=x+1, \\
  x\vee y & :=x+y+(x\wedge y)
\end{align*}
then $(F_q,\vee,\wedge,{}',0,1)$ is a Boolean algebra.
\end{theorem}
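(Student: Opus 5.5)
The plan is to apply Theorem~\ref{th1}(ii) to the ring $\mathbb F_q=(F_q,+,\cdot,0,1)$ with the operation $\wedge$ defined in the statement. Theorem~\ref{th1}(ii) then yields directly that $(F_q,\vee,\wedge,{}',0,1)$ is a Boolean algebra, where $x\vee y:=x+y+(x\wedge y)$ and $x':=x+1$. These are exactly the operations defined in the statement. So it suffices to check the hypotheses of Theorem~\ref{th1}, namely:
\begin{enumerate}[(a)]
\item $\mathbb F_q$ is a unitary ring of characteristic $2$;
\item $\wedge$ is idempotent, commutative and associative;
\item $\wedge$ is distributive with respect to $+$;
\item the identity $x\wedge1\approx x$ holds.
\end{enumerate}

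Point (a) is clear, since $q=2^n$. The remark after Theorem~\ref{th1} also gives it, because $\mathbb F_q$ satisfies $x^{2^n}\approx x$. Points (b) and (d) are exactly the content of Proposition~\ref{prop1}.

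For point (c) I will invoke Lemma~\ref{lem4}. Take the finite index set $I=\{0,\dots,n-1\}^2$, and absorb the coefficients $\Tr(\alpha\beta_i\beta_j)\in\{0,1\}$ by keeping only those pairs $(i,j)$ with coefficient $1$. Then $x\wedge y$ has the form $\sum_{(i,j)\in I'}x^{2^i}y^{2^j}$ for a subset $I'\subseteq I$. The lemma needs $\wedge$ to be commutative, and this is given by Proposition~\ref{prop1}. It also follows visibly from the symmetric form in Lemma~\ref{lem1}, or from the symmetry $\Tr(\alpha\beta_i\beta_j)=\Tr(\alpha\beta_j\beta_i)$. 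Lemma~\ref{lem4} then gives distributivity. (The proof of Lemma~\ref{lem6} already used this argument.)

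No step is a genuine obstacle, since all the ingredients are prepared. The only point needing care is the reduction of the coefficients $\Tr(\alpha\beta_i\beta_j)$ to $0/1$ selectors, so that Lemma~\ref{lem4} applies literally. This holds because the trace takes values in $\{0,1\}$, as shown before the Normal Basis Theorem remark.
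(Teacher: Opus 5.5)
Your proposal is correct and follows the paper's proof exactly: the paper also derives the theorem from Theorem~\ref{th1}, Lemma~\ref{lem4} and Proposition~\ref{prop1}. Your remark that the coefficients $\Tr(\alpha\beta_i\beta_j)\in\{0,1\}$ act as selectors, so that Lemma~\ref{lem4} applies literally, spells out a step the paper leaves implicit.
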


\begin{proof}
This follows from Theorem~\ref{th1}, Lemma~\ref{lem4} and Proposition~\ref{prop1}.
\end{proof}

In order to extend our result formulated in Theorem~\ref{th2} for fields to arbitrary unitary commutative rings of characteristic $2$ satisfying the identity $x^{2^n}\approx x$ for some $n\ge1$, we need the next proposition.

\begin{proposition}\label{prop3}
Let $q:=2^n$. Then $\mathcal V_n$ is generated by $\mathbb F_q$. More precisely,
\[
\mathcal V_n=\mathsf{SP}(\{\mathbb F_q\}).
\]
\end{proposition}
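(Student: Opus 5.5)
We prove the two inclusions $\mathsf{SP}(\{\mathbb F_q\})\subseteq\mathcal V_n$ and $\mathcal V_n\subseteq\mathsf{SP}(\{\mathbb F_q\})$.

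\emph{First inclusion.} The field $\mathbb F_q$ is a unitary commutative ring of characteristic $2$. Every $x\in F_q$ satisfies $x^q=x$, since the multiplicative group has order $q-1$. Each defining condition of $\mathcal V_n$ is an identity, and identities are preserved under products and subrings containing $1$. Hence $\mathsf{SP}(\{\mathbb F_q\})\subseteq\mathcal V_n$.

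\emph{Second inclusion.} Let $\mathbf R\in\mathcal V_n$. We show that $\mathbf R$ embeds into a power of $\mathbb F_q$.

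\emph{Step 1: $\mathbf R$ is reduced.} If $a^k=0$, pick $m$ with $q^m\ge k$. Iterating $a^q=a$ gives $a=a^{q^m}=0$.

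\emph{Step 2: reduction to prime quotients.} The intersection of all prime ideals of $\mathbf R$ is the nilradical, which is $0$ by Step 1. So the natural map $R\to\prod_P R/P$, over all prime ideals $P$, is an injective unitary homomorphism.

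\emph{Step 3: each $R/P$ is a field.} Each $R/P$ is an integral domain satisfying $x^q\approx x$. For $x\neq0$ we have $x(x^{q-1}-1)=0$, hence $x^{q-1}=1$. Therefore every nonzero element is invertible and $R/P$ is a field.

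\emph{Step 4: identifying the fields.} Every element of $R/P$ is a root of $X^q-X$. This polynomial has at most $q$ roots in a field, so $R/P$ is finite, say of order $2^d$. Every element of $\mathbb F_{2^d}$ satisfies $x^q=x$. A generator $g$ of $\mathbb F_{2^d}^*$ then has order $2^d-1$ dividing $q-1=2^n-1$, which forces $d\mid n$. By the subfield criterion (cf.\ \cite{LN}), $\mathbb F_{2^d}$ embeds in $\mathbb F_q$ as a unitary subring.

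\emph{Conclusion.} Composing these embeddings, $\mathbf R$ embeds unitarily into $(\mathbb F_q)^I$ for some index set $I$. Hence $\mathbf R\in\mathsf{SP}(\{\mathbb F_q\})$. An alternative route for Step 2 is to note that $\mathbf R$ is von Neumann regular, since $a=a\cdot a^{q-2}\cdot a$.

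The main obstacle is Step 2, which relies on the standard fact that the nilradical equals the intersection of the prime ideals (via Zorn's lemma). The divisibility $d\mid n$ in Step 4 is routine.
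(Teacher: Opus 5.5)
Your proof is correct and follows essentially the same route as the paper: you show reducedness, embed $\mathbf R$ into the product of its prime quotients, and use the roots of $X^q-X$ to see that each quotient is a finite field $\mathbb F_{2^d}$ with $2^d-1\mid 2^n-1$, hence $d\mid n$. The differences are cosmetic: you show $R/P$ is a field directly via $x^{q-1}=1$ rather than by finiteness, and you call the step $2^d-1\mid 2^n-1\Rightarrow d\mid n$ routine where the paper proves it with the division algorithm.
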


\begin{proof}
Let $\mathbf R=(R,+,\cdot,1)\in\mathcal V_n$. First, $\mathbf R$ is reduced. Indeed, if $a\in R$ is nilpotent, then $a^{q^m}=0$ for some sufficiently large positive integer $m$. On the other hand, the identity $x^q\approx x$
implies
\[
a^{q^m}=a.
\]
Hence $a=0$. Therefore, the nilradical of $\mathbf R$ is zero, and thus $\mathbf R$ embeds into the product of its prime quotients:
\[
R\hookrightarrow\prod_{\mathfrak p\in\operatorname{Spec}(\mathbf R)}\mathbf R/\mathfrak p.
\]
Each quotient $\mathbf R/\mathfrak p$ is an integral domain satisfying the identity $x^q\approx x$. Since every element of $\mathbf R/\mathfrak p$ is a root of the polynomial $X^q-X$, the quotient $\mathbf R/\mathfrak p$ has at most $q$ elements. Hence it is a finite field, say
\[
\mathbf R/\mathfrak p\cong\mathbb F_{2^d}
\]
for some positive integer $d$. \\
Now let $u\in(R/\mathfrak p)\setminus\{0\}$. Since $u^q=u$, we have
\[
u^{q-1}=1.
\]
Consequently, the exponent of the multiplicative group $(\mathbf R/\mathfrak p)^\times$, which is cyclic of order $2^d-1$ divides $q-1=2^n-1$. Therefore,
\[
2^d-1\mid 2^n-1.
\]
It follows that $d\mid n$. This can be seen as follows. There exist non-negative integers $k$ and $r$ such that $n=kd+r$ with $r<d$. Since $2^d\equiv1\mod2^d-1$, we obtain $2^n-1=2^{kd+r}-1\equiv2^r-1\mod2^d-1$. If $2^d-1\mid2^n-1$ then also $2^d-1\mid2^r-1$. Now $r>0$ would imply $0<2^r-1<2^d-1$ which is impossible. Hence $r=0$, and therefore $d\mid n$. Hence
\[
F_{2^d}\subseteq F_{2^n}=F_q.
\]
Thus every prime quotient $\mathbf R/\mathfrak p$ is isomorphic to a subfield of $\mathbb F_q$. Since $\mathbf R$ embeds into their direct product, we obtain
\[
\mathbf R\in\mathsf{SP}(\{\mathbb F_q\})
\]
proving $\mathcal V_n\subseteq\mathsf{SP}(\{\mathbb F_q\})$. \\
The converse inclusion is immediate, since $\mathbb F_q$ is a unitary commutative ring of characteristic $2$ satisfying the identity $x^q\approx x$, and these identities are preserved under subrings and direct products. Therefore
\[
\mathcal V_n=\mathsf{SP}(\{\mathbb F_q\}).
\]
\end{proof}

As a consequence we formulate our main result.

\begin{theorem}\label{th3}
Let $n\ge1$ and $\mathbf R=(R,+,\cdot,1)\in\mathcal V_n$. Then there exists a binary term operation $\wedge$ on $R$ such that $(R,\vee,\wedge,{}',0,1)$ is a Boolean algebra where $x\vee y:=x+y+(x\wedge y)$ and $x':=x+1$ for all $x,y\in R$.
\end{theorem}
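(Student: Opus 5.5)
The plan is to use the same polynomial term as in Theorem~\ref{th2}. The coefficients $\Tr(\alpha\beta_i\beta_j)$ all lie in $\{0,1\}$, so the expression
\[
t(x,y):=\sum_{i,j=0}^{n-1}\Tr(\alpha\beta_i\beta_j)x^{2^i}y^{2^j}
\]
is a genuine ring term: a sum of monomials $x^{2^i}y^{2^j}$ over a fixed finite index set $I\subseteq\{0,\dots,n-1\}^2$. It uses no constants other than possibly $0$. On an arbitrary $\mathbf R\in\mathcal V_n$ I define $x\wedge y:=t(x,y)$, $x\vee y:=x+y+(x\wedge y)$ and $x':=x+1$. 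It then suffices to check the hypotheses of Theorem~\ref{th1}(ii) for $\mathbf R$. The characteristic $2$ condition follows from the observation after Theorem~\ref{th1}, or directly from $\mathbf R\in\mathcal V_n$.

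The required properties are idempotency, commutativity, associativity of $\wedge$, and the identity $x\wedge1\approx x$. Each is an identity in the language of unitary rings, since $t$ is a term. By Proposition~\ref{prop1}, all of them hold in $\mathbb F_q$. Identities are preserved under direct products and subalgebras, and Proposition~\ref{prop3} gives $\mathcal V_n=\mathsf{SP}(\{\mathbb F_q\})$. Hence these identities hold in $\mathbf R$.

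For distributivity with respect to $+$, commutativity of $\wedge$ on $\mathbf R$ is now available, so Lemma~\ref{lem4} applies directly to $\mathbf R$ with this index set $I$. Alternatively, distributivity is itself an identity true in $\mathbb F_q$ by Lemma~\ref{lem4} and therefore transfers in the same way. Theorem~\ref{th1}(ii) then yields that $(R,\vee,\wedge,{}',0,1)$ is a Boolean algebra.

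The only delicate point is making sure that $t$ really is a term independent of the field. The coefficients depend on the chosen normal basis, but they are fixed elements of $\{0,1\}$. So $t$ is a fixed integer-coefficient polynomial, and it is interpreted uniformly in every member of $\mathcal V_n$, including the subfields $\mathbb F_{2^d}$ that appear as factors. Once this is noted, the argument is just the transfer of identities along $\mathsf{SP}$.
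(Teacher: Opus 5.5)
Your proposal is correct and follows the paper's own route. You realize $\mathbf R$ inside a product of copies of $\mathbb F_q$ via Proposition~\ref{prop3}, use the term from Theorem~\ref{th2}, whose properties hold in $\mathbb F_q$ by Proposition~\ref{prop1} and Lemma~\ref{lem4}, and conclude with Theorem~\ref{th1}(ii). Your observation that the coefficients lie in $\{0,1\}$, so that the needed properties are genuine identities and transfer along $\mathsf{SP}$, spells out what the paper compresses into "consider the operation $\wedge$ on $R$ componentwise."
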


\begin{proof}
According to Proposition~\ref{prop3}, $\mathbf R$ can be considered as a subalgebra of a direct product of fields for which the operation $\wedge$ is derived in Theorem~\ref{th2}. Hence we can consider now the operation $\wedge$ on $R$ componentwise. The remaining assertions follow immediately by Theorem~\ref{th1}.
\end{proof}

\section{Examples for $n\le5$}

\begin{example}
The case $n=1$ is well-known {\rm(}cf.\ e.g.\ {\rm\cite{Bi})} since we obtain a unitary Boolean ring. We have $\mathbb F_2\cong(\mathbb Z_2[x])/(x)$. If we choose $\alpha=\beta:=[1]$ then we obtain
\[
x\wedge y=xy.
\]
By Theorem~\ref{th3} and its proof, if $(R,+,\cdot,1)\in\mathcal V_1$ and
\begin{align*}
x\wedge y & :=xy, \\
  x\vee y & :=x+y+(x\wedge y), \\
       x' & :=x+1
\end{align*}
for all $x,y\in R$ then $(R,\vee,\wedge,{}',0,1)$ becomes a Boolean algebra.
\end{example}

\begin{example}
We have $\mathbb F_4\cong(\mathbb Z_2[x])/(x^2+x+1)$. If we choose $\alpha=\beta:=[x]$ then we obtain
\[
x\wedge y=xy^2+x^2y+x^2y^2.
\]
By Theorem~\ref{th3} and its proof, if $(R,+,\cdot,1)\in\mathcal V_2$ and
\begin{align*}
x\wedge y & :=xy^2+x^2y+x^2y^2, \\
  x\vee y & :=x+y+(x\wedge y), \\
       x' & :=x+1
\end{align*}
for all $x,y\in R$ then $(R,\vee,\wedge,{}',0,1)$ becomes a Boolean algebra. {\rm This result was already published in \cite{KDG}.}
\end{example}

\begin{example}
We have $\mathbb F_8\cong(\mathbb Z_2[x])/(x^3+x+1)$. If we choose $\alpha=\beta:=[x+1]$ then we obtain
\[
x\wedge y=xy^2+x^2y+x^2y^4+x^4y^2+x^4y^4.
\]
By Theorem~\ref{th3} and its proof, if $(R,+,\cdot,1)\in\mathcal V_3$ and
\begin{align*}
x\wedge y & :=xy^2+x^2y+x^2y^4+x^4y^2+x^4y^4, \\
  x\vee y & :=x+y+(x\wedge y), \\
       x' & :=x+1
\end{align*}
for all $x,y\in R$ then $(R,\vee,\wedge,{}',0,1)$ becomes a Boolean algebra. {\rm This result is already contained in the paper \cite{CK} by the first two authors.}
\end{example}

\begin{example}
We have $\mathbb F_{16}\cong(\mathbb Z_2[x])/(x^4+x+1)$. If we choose $\alpha:=[x^3+1]$ and $\beta:=[x^3+x^2+x+1]$ then we obtain
\[
x\wedge y=xy^4+x^4y+x^2y^4+x^4y^2+x^2y^8+x^8y^2+x^8y^8.
\]
By Theorem~\ref{th3} and its proof, if $(R,+,\cdot,1)\in\mathcal V_4$ and
\begin{align*}
x\wedge y & :=xy^4+x^4y+x^2y^4+x^4y^2+x^2y^8+x^8y^2+x^8y^8, \\
  x\vee y & :=x+y+(x\wedge y), \\
       x' & :=x+1
\end{align*}
for all $x,y\in R$ then $(R,\vee,\wedge,{}',0,1)$ becomes a Boolean algebra. {\rm This result is already contained in the paper \cite{CK} by the first two authors.}
\end{example}

\begin{example}
We have $\mathbb F_{32}\cong(\mathbb Z_2[x])/(x^5+x^2+1)$. If we choose $\alpha=\beta:=[x+1]$ then we obtain
\[
x\wedge y=xy^2+x^2y+x^2y^8+x^8y^2+x^4y^8+x^8y^4+x^4y^{16}+x^{16}y^4+x^{16}y^{16}.
\]
By Theorem~\ref{th3} and its proof, if $(R,+,\cdot,1)\in\mathcal V_5$ and
\begin{align*}
x\wedge y & :=xy^2+x^2y+x^2y^8+x^8y^2+x^4y^8+x^8y^4+x^4y^{16}+x^{16}y^4+x^{16}y^{16}, \\
  x\vee y & :=x+y+(x\wedge y), \\
       x' & :=x+1
\end{align*}
for all $x,y\in R$ then $(R,\vee,\wedge,{}',0,1)$ becomes a Boolean algebra. {\rm This result is new.}
\end{example}








Authors' addresses:

Ivan Chajda \\
Palack\'y University Olomouc \\
Faculty of Science \\
Department of Algebra and Geometry \\
17.\ listopadu 12 \\
771 46 Olomouc \\
Czech Republic \\
ivan.chajda@upol.cz

Miroslav Kola\v r\'ik \\
Palack\'y University Olomouc \\
Faculty of Science \\
Department of Computer Science \\
17.\ listopadu 12 \\
771 46 Olomouc \\
Czech Republic \\
miroslav.kolarik@upol.cz

Helmut L\"anger \\
TU Wien \\
Faculty of Mathematics and Geoinformation \\
Institute of Discrete Mathematics and Geometry \\
Wiedner Hauptstra\ss e 8-10 \\
1040 Vienna \\
Austria, and \\
Palack\'y University Olomouc \\
Faculty of Science \\
Department of Algebra and Geometry \\
17.\ listopadu 12 \\
771 46 Olomouc \\
Czech Republic \\
helmut.laenger@tuwien.ac.at
\end{document}